\newtheorem{theorem}{Theorem}
\newtheorem{proposition}[theorem]{Proposition}
\newtheorem{corollary}[theorem]{Corollary}
\newtheorem{lemma}[theorem]{Lemma}
\theoremstyle{definition}
\newtheorem{definition}[theorem]{Definition}
\newtheorem{remark}[theorem]{Remark}
\newcommand{\CC}{\mathbb{C}} \newcommand{\NN}{\mathbb{N}}
\def\PP{{\mathbb{P}}} \def\RR{{\mathbb{R}}} \newcommand{\ZZ}{\mathbb{Z}} 
\def\Res{\mathcal{R}}
\newcommand{\sep}{,\xspace}
\begin{document}

\title{Sparse implicitization by interpolation: \\Geometric computations using matrix representations}
\author{Ioannis~Z.~Emiris\thanks{Dept Informatics and Telecoms, University of Athens, Greece.} \and
 Tatjana Kalinka$^*$  \and  
 Christos Konaxis$^*$\thanks{Corresponding author. E-mail: ckonaxis@di.uoa.gr, Tel.: +302107275342}}

\maketitle
 
\begin{abstract}
Based on the computation of a superset of the implicit support,
implicitization of a parametrically given (hyper)surface is reduced to computing the nullspace of a numeric matrix.
Our approach exploits the sparseness of the given parametric equations
and of the implicit polynomial.
In this work, we study how this interpolation matrix can be used
to reduce some key geometric predicates on the (hyper)surface
to simple numerical operations on the matrix,
namely membership and sidedness for given query points.
We illustrate our results with examples based on our Maple implementation.

\bigskip\noindent
\textit{Key\;words:}\/
geometric representation\sep implicitization\sep linear algebra\sep
sparse polynomial\sep membership\sep sidedness operation
\end{abstract}

\section{Introduction}

A fundamental question in changing representation of geometric objects is
implicitization, namely the process of changing the representation
of a geometric object from parametric to implicit. 
It is a fundamental operation with several applications
in computer-aided geometric design (CAGD) and geometric modeling. 
There have been numerous approaches for implicitization, including resultants,
Groebner bases, moving lines and surfaces, and interpolation techniques.

In this work, we restrict attention to hyper-surfaces and exploit a
matrix representation of hyper-surfaces in order
to perform certain critical operations efficiently, without developing
the actual implicit equation.
Our approach is based on potentially interpolating the unknown coefficients
of the implicit polynomial, but our algorithms shall avoid actually
computing these coefficients.
The basis of this approach is a sparse interpolation matrix, 
sparse in the sense that it is constructed 
when one is given a superset of the implicit polynomial's monomials.
The latter is computed by means of sparse resultant theory,
so as to exploit the input and output sparseness, in other words,
the structure of the parametric equations as well as the implicit 
polynomial. 

The notion that formalizes sparseness is the support of a polynomial
and its Newton polytope.  
Consider a polynomial $f$ with real coefficients in $n$ variables
$t_1,\dots,t_n$, denoted by
$$
f=\sum_a{c_{a}}{t}^{a}\, \in \RR[t_1,\dots,t_n], a\in\NN^n , c_{a}\in\RR,
\mbox{ where } t^a= t_1^{a_1}\cdots t_n^{a_n} .
$$
The \textit{support} of $f$ is the set $\{a \in \mathbb{N}^{n}: c_{a}\neq 0\};$
its \textit{Newton polytope} $N(f)\subset\RR^n$
is the convex hull of its support.
All concepts extend to the case of Laurent polynomials, i.e.\ with
integer exponent vectors $a\in\ZZ^n$.

We call the support and the Newton polytope of the implicit equation,
\emph{implicit support} and \emph{implicit polytope}, respectively. 
Its vertices are called \emph{implicit vertices}.
The implicit polytope is computed from the Newton polytope of the 
sparse (or toric) resultant, or \emph{resultant polytope},
of polynomials defined by the parametric equations.
Under certain genericity assumptions, the implicit polytope coincides
with a projection of the resultant polytope, see Section~\ref{Ssupport}.
In general, a translate of the implicit polytope is contained in the projected
resultant polytope,
in other words, a superset of the implicit support is given by the
lattice points contained in the projected resultant polytope, modulo the translation. 
A superset of the implicit support can also be obtained by other methods,
see Section~\ref{Sprevious}; the rest of our approach does not
depend on the method used to compute this support.

The predicted support is used to build a numerical matrix whose kernel
is, ideally, 1-dimensional, thus yielding (up to a nonzero scalar multiple)
the coefficients corresponding to the predicted implicit support.
This is a standard case of {\em sparse interpolation} of the polynomial
from its values. When dealing with hyper-surfaces of high dimension, or when the support
contains a large number of lattice points, then exact solving is expensive.
Since the kernel can be computed numerically, our approach also yields an
approximate sparse implicitization method.

Our method of sparse implicitization, which relies on interpolation,
was developed in \cite{EKKL11,EKKL12-spm}.
For details see the next section.
Our method handles (hyper)surfaces given parametrically by
polynomial, rational, or trigonometric parameterizations and, 
furthermore, automatically handles the case of base points.

The standard version of the method requires 
to compute the monomial expansion of the implicit equation.
However, it would be preferable if various operations and predicates
on the (hyper)surface could be completed by using the matrix without 
developing the implicit polynomial.
This is an area of strong current interest, since expanding, storing
and manipulating the implicit equation can be very expensive,
whereas the matrix offers compact storage and fast, linear algebra
computations.
This is precisely the premise of this work.

The main contribution of this work is to show that matrix representation
can be very useful when based on sparse interpolation matrices, which, when non-singular, have the 
property that their determinant equals the implicit equation.
In particular, we use the interpolation matrix to reduce
some geometric problems to numerical linear algebra.
We reduce the membership test
$p(q)=0$, for a query point $q$ and a hyper-surface defined implicitly
by $p(x)=0$, to a rank test on an
interpolation matrix for $p(x)$.
Moreover, we use the (nonzero) sign of the determinant of the same matrix
to decide sidedness for query points $q$ that do not lie on the surface
$p(x)=0$.
Our algorithms have been implemented in Maple.

The paper is organized as follows:
Section~\ref{Sprevious} overviews previous work.
Section~\ref{Ssupport} describes our approach to predicting
the implicit support while exploiting sparseness,
presents our implicitization algorithm based on
computing a matrix kernel and focuses on the case of high dimensional kernels.
In Section \ref{Soperations} we formulate membership and sidedness tests as numerical linear algebra operations
on the interpolation matrix.
We conclude with future work and open questions.

\section{Previous work}\label{Sprevious}

This section overviews existing work.

If $S$ is a superset of the implicit support, then the most direct method to reduce implicitization to linear algebra is
to construct a $|S| \times |S|$ matrix $M$,
indexed by monomials with exponents in $S$ (columns) and $|S|$ different values
(rows) at which all monomials get evaluated.
Then the vector 
of coefficients of the implicit equation is in the kernel of $M$. 
This idea was used in \cite{EKKL11,EK03,MarMar02,StuYu08}; it is also the
starting point of this paper.

An interpolation approach was 
based on integrating matrix $M=S S^\top$, over each parameter
$t_1, \dots , t_n$ \cite{CGKW00}. 
Then the vector of implicit coefficients is in the kernel of $M$. 
In fact, the authors propose to consider successively larger supports
in order to capture sparseness. 
This method covers polynomial, rational, and trigonometric
parameterizations, but the matrix entries take big values (e.g.\ up to $10^{28}$), so it is
difficult to control its numeric corank, i.e.\ the dimension of its nullspace. 
Thus, the accuracy of the approximate implicit polynomial is unsatisfactory.
When it is computed over floating-point numbers, the implicit polynomial
does not necessarily have integer coefficients.
They discuss post-processing to yield
integer relations among the coefficients, but only in small examples.

Our method of sparse implicitization was introduced in~\cite{EKKL11},
where the overall algorithm was presented together with some results on
its preliminary implementation, including the case
of approximate sparse implicitization. 
The emphasis of that work was on sampling and oversampling the parametric
object so as to create a numerically stable matrix, and examined
evaluating the monomials at random integers,
random complex numbers of modulus~1, and complex roots of unity.
That paper also proposed ways to obtain a smaller implicit polytope by
downscaling the original polytope when
the corresponding kernel dimension was higher than one.

One issue was that the kernel of the matrix might be of high dimension,
in which case 
the equation obtained may be a multiple of the implicit equation.
In~\cite{EKKL12-spm} this problem was addressed by
describing the predicted polytope and showing that, if 
the kernel is not 1 dimensional,
then the predicted polytope is the Minkowski sum
of the implicit polytope and an extraneous polytope.
The true implicit polynomial can be obtained by
taking the greatest common divisor (GCD)
of the polynomials corresponding to at least two and at most all
of the kernel vectors, or via multivariate polynomial factoring.
A different approach would be to Minkowski decompose the predicted 
polytope and identify its summand corresponding to the implicit polytope.
Our method handles (hyper)surfaces given parametrically by
polynomial, rational, or trigonometric parameterizations and, 
furthermore, automatically handles the case of base points.

Our implicitization method is based on the computation of the
implicit polytope, given the Newton polytopes of the parametric polynomials.
Then the implicit support is a subset of the set of
lattice points contained in the computed implicit polytope.
There are methods for the computation of the implicit polytope 
based on tropical geometry~\cite{StTeYu07,StuYu08}, see also~\cite{DAndSomb09}.
Our method relies on sparse elimination theory so as to compute
the Newton polytope of the sparse resultant.
In the case of curves, the implicit support is directly determined 
in~\cite{EKP10}.  

In~\cite{EFKP13ijcga}, they develop
an incremental algorithm to compute the resultant polytope, 
or its orthogonal projection along a given direction.
It is implemented in package
\texttt{ResPol}\footnote{\texttt{http://sourceforge.net/projects/respol}}.
The algorithm exactly computes vertex- and halfspace-representations
of the target polytope and it is output-sensitive.  
It also computes a triangulation of the polytope, which may be useful in
enumerating the lattice points.
It is efficient for inputs relevant to implicitization:
it computes the polytope of surface equations within $1$ second,
assuming there are less than 100 terms in the parametric polynomials,
which includes all common instances in geometric modeling.
This is the main tool for support prediction used in this work.

Approximate implicitization over floating-point numbers was introduced 
in a series of papers. Today, there are direct \cite{DT03,WTJD04}
and iterative techniques \cite{AiJuPo11}.
An idea used in approximate implicitization is to use
successively larger supports, starting with a quite small set and
extending it so as to reach the exact implicit support.
Existing approaches have used upper bounds on the
total implicit degree, thus ignoring any sparseness structure.
Our methods provide a formal manner to examine different supports,
in addition to exploiting sparseness, based on the implicit polytope.
When the kernel dimension is higher than one, one may downscale the
polytope so as to obtain a smaller implicit support.


The use of matrix representations in geometric modeling and CAGD is not new. 
In \cite{BMT09} they introduce matrix representations of algebraic curves and surfaces
and manipulate them  to address the 
curve/surface intersection problem by means of numerical linear algebra techniques
In \cite{BuTha12}, the authors make use of some generalized matrix-based
representations of parameterized surfaces in order to represent the 
intersection curve of two such surfaces as the zero set of a matrix determinant.
Their method extends to a larger class of rational 
parameterized surfaces, the applicability of a general approach to 
the surface/surface intersection problem in \cite{CuMa91}.
In \cite{Buse14} they introduce a new implicit representation of rational
B\'ezier curves 
and surfaces in the 3-dimensional space. Given such a curve or surface, this 
representation consists of a matrix whose entries depend on the space variables 
and whose rank drops exactly on this curve or surface.

\section{Implicitization by support prediction}\label{Ssupport} 

This section describes how sparse elimination can be used to compute the 
implicit polytope by exploiting sparseness 
and how this can reduce implicitization to linear algebra.
We also discuss how the quality of the predicted support affects 
the implicitization algorithm and develop the necessary constructions that 
allow us to formulate the membership and sidedness criteria in the next section.

\subsection{Sparse elimination and support prediction}

A \textit{parameterization} of a geometric object of co-dimension one,
in a space of dimension $n + 1$,
can be described by a set of parametric functions:
$$
{x_0} = f_0(t_1,\ldots,t_n),\ldots,x_n=f_n(t_1,\ldots,t_n),\, : \Omega \rightarrow \RR^{n+1},\,  \Omega:=\Omega_1\times\dots\times\Omega_n,~ \Omega_i \subseteq \RR
$$
where $t:=({t_1}, {t_2},\dots, {t_n})$ is the vector of parameters and
$f:=(f_0,$ $\dots,f_n)$ is a vector of continuous functions,
also called \textit{coordinate functions},
including polynomial, rational, and trigonometric functions.
We assume that, in the case of trigonometric functions, they may be converted to rational
functions by the standard half-angle transformation
$$
\sin \theta=\frac{2\tan {\theta}/{2} }{1+\tan^2 {\theta}/{2}},\;
\cos \theta=\frac{1-\tan^2 {\theta}/{2}}{1+\tan^2 {\theta}/{2}},
$$
where the parametric variable becomes $t=\tan \theta/2$.
On parameterizations depending on both $\theta$ and its trigonometric function, we may
approximate the latter by a constant number of terms in their series expansion.

The \textit{implicitization problem} asks for the smallest algebraic variety 
containing the closure of the image of the parametric map
$f :\RR^n\rightarrow\RR^{n+1}:  t \mapsto f(t)$.
Implicitization of planar curves and surfaces in three dimensional space
corresponds to $n = 1$ and $n = 2$ respectively. 
The image of $f$ is contained in the variety defined by the ideal of all polynomials
$p(x_0,\ldots,x_n)$ such that ${p}(f_0(t),\dots,f_n(t))={0}$, for all $t$ in $\Omega.$
We restrict ourselves to the case when this is a principal ideal, and we wish
to compute its unique defining polynomial 
$p(x_0,\dots,x_n) \in \RR[x_0,\ldots,x_n]$,
given its Newton polytope $P=N(p)$, or a polytope that contains it.
We can regard the variety in question as the (closure of the) projection of the graph of
map $f$ to the last $n+1$ coordinates.
Assuming the rational parameterization
\begin{equation}\label{eq:rational_system}
 {x_i}= {{f_i}(t)}/{{g_i}(t)}, \; i=0,\ldots,n ,
\end{equation}
implicitization is reduced to eliminating $t$ from
the polynomials in $(\RR[x_0,\ldots,x_n])[t,y]$:
\begin{align}\label{eq:polysystem}
F_i &:= {x_i}{{g_i}(t)} - {f_i}(t),~i=0,\ldots,n ,\\
\nonumber F_{n+1}&:=1- y g_0(t)\cdots g_n(t),
\end{align}
where $y$ is a new variable and $F_{i+1}$ assures that all $g_i(t)\ne 0$.
If one omits $F_{n+1}$, the generator of the corresponding (principal) ideal
would be a multiple of the implicit equation.
Then the extraneous factor corresponds to the $g_i$. 
Eliminating $t,y$ may be done by taking the \emph{resultant} of
the polynomials in~\eqref{eq:polysystem}.

Let $A_i\subset\ZZ^n,~i=0,\ldots,n+1$ be the supports of the polynomials $F_i$ and
consider the generic polynomials
\begin{equation}\label{eq:genpolysystem}
F_0',\dots,F_n',F_{n+1}' 
\end{equation}
with the same supports $A_i$ and symbolic coefficients ${c}_{ij}$.

\begin{definition}
Their \textit{sparse resultant} $\mbox{Res}(F_0',\ldots,F_{n+1}')$ is
a polynomial in the $c_{ij}$ with integer coefficients, namely
$$
\Res\in\mathbb{Z}[c_{ij}:i =0,\dots,n+1, j =1,\dots, |A_i| ] ,
$$
which is unique up to sign and vanishes if and only if the system
$F_0'=F_1'=\cdots=F_{n+1}'=0$ has a common root in a specific variety.
This variety is the projective variety $\PP^n$ over the algebraic closure
of the coefficient field
in the case of projective (or classical) resultants, or the toric variety
defined by the $A_i$'s. 
\end{definition}

The implicit equation of the parametric hyper-surface defined in~\eqref{eq:polysystem}
equals the resultant $\mbox{Res}(F_0,\ldots,F_{n+1})$, provided that the
latter does not vanish identically.
$\mbox{Res}(F_0,\ldots,F_{n+1})$ can be obtained from $\mbox{Res}(F_0',\ldots,F_{n+1}')$
by specializing the symbolic
coefficients of the $F_i'$'s to the actual coefficients of the $F_i$'s,
provided that this specialization is generic enough.
Then the implicit polytope $P$ equals the projection $Q$ of the 
resultant polytope to the space of the implicit variables,
i.e.\ the Newton polytope of the specialized resultant, up to some translation.
When the specialization of the $c_{ij}$ is not generic enough, then 
$Q$ contains a translate of $P$.
This follows from the fact that the method
computes the same resultant polytope as the tropical approach,
where the latter is specified in \cite{StTeYu07}.
Note that there is no exception even in the presence of base points.

\begin{proposition}{\rm\cite[Prop.5.3]{StTeYu07}}\label{Pnongeneric}
Let $f_0,\dots,f_n\in\CC[t_1^{\pm 1},$ $\dots, t_n^{\pm 1}]$ be any Laurent
polynomials whose ideal $I$ of algebraic relations is principal, say
$I=\langle p \rangle$, and let $P_i\subset\RR^n$ be the Newton polytope
of $f_i$.
Then the resultant polytope which is constructed combinatorially from 
$P_0,\dots,P_n$ contains a translate of the Newton polytope of $p$.
\end{proposition}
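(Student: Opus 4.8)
The plan is to exhibit the implicit polynomial $p$ as a divisor of a specialization of a \emph{generic} sparse resultant, and then to read off the polytope containment from how Newton polytopes behave under specialization. Put $P_i=N(f_i)$ and form the generic polynomials $F_i'=u_i-f_i'(t)$, $i=0,\dots,n$, where $f_i'$ has Newton polytope $P_i$ with otherwise symbolic coefficients $c$ and $u_0,\dots,u_n$ are new variables; let $R=R(u,c)$ be the sparse resultant of $F_0',\dots,F_n'$ with respect to $t$. The supports of the $F_i'$ in $t$ are $\{0\}\cup(P_i\cap\ZZ^n)$, with the coefficient of $t^0$ carrying the new variable $u_i$, so $N(R)$ depends only on $P_0,\dots,P_n$; its image $Q\subset\RR^{n+1}$ under the projection onto the $u$-coordinates is the resultant polytope, projected to the implicit coordinates, that appears in the statement (the polytope called $Q$ in the surrounding text). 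It is this $Q$ that we must show contains a translate of $N(p)$.

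First I would specialize: replace the symbolic coefficients $c$ by the actual coefficients of the given $f_i$, obtaining $\overline R(u)=\Res\bigl(u_0-f_0(t),\dots,u_n-f_n(t)\bigr)\in\CC[u_0,\dots,u_n]$. Since substituting constants for the variables $c$ can only contract a Newton polytope, $N(\overline R)\subseteq Q$. On the other hand $\overline R$ vanishes on the image of $t\mapsto(f_0(t),\dots,f_n(t))$ --- if $u=f(t_0)$ then the polynomials $u_i-f_i(t)$ share the root $t_0$ --- hence on its Zariski closure, which is $V(I)=V(p)$ since $I=\langle p\rangle$ is the entire ideal of algebraic relations. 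As $I$ is prime, $p$ is square-free, so either $\overline R\equiv 0$ or $p\mid\overline R$; in the latter case $\overline R=p\cdot h$ gives $N(\overline R)=N(p)+N(h)$, whence a translate of $N(p)$ lies in $N(\overline R)\subseteq Q$, as claimed.

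The one point that needs genuine work --- and the main obstacle --- is ruling out $\overline R\equiv 0$, and this is exactly where the hypothesis that $\langle p\rangle$ is the full relation ideal (so that the image of $f$ is a proper hypersurface) is used. I would argue that $\overline R\equiv 0$ would force the system $u_0-f_0(t)=\dots=u_n-f_n(t)=0$ to have, for \emph{every} value of $u$, a common solution in the toric variety attached to the supports. For $u$ outside the proper hypersurface $V(p)$ there is no solution in the torus, such a solution being a point $t$ with $f(t)=u$; and a solution on a boundary stratum would force the corresponding face forms $f_i^{\eta}$, together with the generic values of the $u_i$ that survive on that stratum, to be simultaneously satisfiable --- impossible for generic $u$ by a dimension count, since those face forms live on a torus of dimension at most $n$ and $n+1$ of them cannot attain a generic value vector. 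Running this over all boundary strata yields $\overline R\not\equiv 0$ and completes the argument. An alternative route, the one taken in \cite{StTeYu07} and the reason the remark preceding the statement invokes it, avoids specialization entirely: one compares the tropical hypersurface of $R$ with the tropicalization of $V(p)$ and reads the polytope containment off the induced inclusion of normal fans, the delicate step then being to pin down enough of the normal-fan (or mixed-subdivision) structure of the resultant polytope to locate the image of each vertex of $N(p)$.
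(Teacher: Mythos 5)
The paper itself does not prove this proposition: it is quoted verbatim from \cite[Prop.\,5.3]{StTeYu07}, where the proof goes through tropical geometry (comparing the tropical variety of the parametrization with the resultant's tropical hypersurface and reading off a polytope inclusion). Your proposal takes a genuinely different route, via a specialized sparse resultant. The core manipulations are sound: $N(\overline R)\subseteq Q$ because substituting constants for the $c$-variables projects the support onto the $u$-coordinates; $I=\langle p\rangle$ prime makes $p$ irreducible, hence square-free; and $\overline R=p\cdot h$ gives $N(\overline R)=N(p)+N(h)$, which contains a translate of $N(p)$.

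The gap is exactly where you flag it, but it is more serious than your dimension count suggests. On a boundary stratum of the toric variety the surviving face form of $u_i-f_i(t)$ contains $u_i$ only when $0$ lies on the relevant face of $\mathrm{conv}(\{0\}\cup P_i)$; for a direction $\eta$ with $\max_{a\in P_i}\langle\eta,a\rangle>0$ for every $i$, \emph{no} $u_i$ survives, and the boundary condition becomes ``$f_0^\eta=\dots=f_n^\eta=0$ has a common root on the stratum torus,'' which is independent of $u$. If such a direction exists with a common root of the face forms, then $\overline R(u)$ vanishes for \emph{every} $u$, i.e.\ $\overline R\equiv 0$, and the factorization argument collapses. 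This is not a pathology you can wave away with ``$n+1$ forms on an $n$-dimensional torus'': it is precisely the base-point phenomenon, and the paper explicitly insists that the proposition holds ``even in the presence of base points,'' i.e.\ even when the resultant route degenerates. The hypothesis that $I$ is principal does not exclude such configurations. So the first route, as written, proves a weaker statement (the case $\overline R\not\equiv 0$); to obtain the proposition in full generality you must fall back on the tropical argument you sketch at the end, which is indeed the proof in \cite{StTeYu07}.
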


Our implicitization method is based on the computation of the
implicit polytope, given the Newton polytopes of the polynomials 
in~\eqref{eq:polysystem}.
Then the implicit support is a subset of the set of
lattice points contained in the computed implicit polytope.
In general, the implicit polytope is obtained from the
projection of the resultant polytope of the polynomials
in~\eqref{eq:genpolysystem} defined by the specialization of their
symbolic coefficients to those of the polynomials in~\eqref{eq:polysystem}.
For the resultant polytope we employ \cite{EFKP13ijcga} and software 
\texttt{ResPol}\footnote{\texttt{http://sourceforge.net/projects/respol}}.

Sparse elimination theory works over the ring of Laurent polynomials $\CC[t_1^{\pm 1},$ $\dots, t_n^{\pm 1}]$
which means that points in the supports of the polynomials 
may have negative coordinates. As a consequence, evaluation points of polynomials cannot have zero coordinates.
In the rest of the paper we make the assumption that all 
polytopes are translated to the positive orthant and have non-empty intersection with every
coordinate axis. This allows us to consider points with zero coordinates.

\subsection{Matrix constructions}\label{SSmatconstruct}

The predicted implicit polytope $Q$ and the set $S$ of lattice points it contains are
used in our implicitization algorithm to construct 
a numerical matrix $M$.
The vectors in the kernel of $M$ contain the coefficients of the monomials
with exponents in $S$ in the implicit polynomial $p(x)$.

Let us describe this construction.
Let $S:=\{ s_1,\ldots,s_{|S|}\}$; each
$s_j = (s_{j0},\ldots,s_{jn})$, $j=1,\ldots,|S|$ is an exponent of a (potential) monomial
$m_j:=x^{s_j}=x_0^{s_{j0}} \cdots x_n^{s_{jn}}$  
of the implicit polynomial,
where ${x_i}= {{f_i}(t)}/{{g_i}(t)}, \; i=0,\ldots,n$, as in
\eqref{eq:rational_system}.
We evaluate $m_j$ at some \emph{generic} point 
$\tau_k \in \CC^n, ~ k = 1,\dots, \mu,~ \mu\geq |S|$. 
Let 
\[
m_j |_{t=\tau_k} :=
\prod_{i=0}^{n} \left(\frac{{f_i}(\tau_k)}{{g_i}(\tau_k)}\right)^{s_{ji}}, \quad j=1,\ldots,|S|
\]
denote the evaluated $j$-th monomial $m_j$ at $\tau_k$.
Thus, we construct an $\mu\times |S|$ matrix $M$
with rows indexed by $\tau_1,\ldots,\tau_\mu$ and columns by $m_1,\ldots,m_{|S|}$:
\begin{equation}\label{Eimatrix}
 M = \begin{bmatrix}
m_1 |_{t=\tau_1} & \cdots & m_{|S|} |_{t=\tau_1}\\[3pt]
\vdots                     & \cdots &  \vdots   \\[3pt]
m_1 |_{t=\tau_{\mu}} & \cdots & m_{|S|} |_{t=\tau_{\mu}}      
\end{bmatrix}.
\end{equation}

To cope with numerical issues, especially when computation is approximate,
we construct a rectangular matrix $M$ by choosing $\mu \geq |S|$ 
values of $\tau$; this overconstrained system increases numerical stability.
Typically $\mu=|S|$ for performing exact kernel computation, and
$\mu= 2|S|$ for approximate numeric computation.

When constructing matrix $M$ we make the assumption that the parametric hyper-surface 
is sampled sufficiently generically by evaluating the parametric expressions at
random points $\tau_k \in \CC^{n}$. Hence we have the following:

\begin{lemma}[\cite{EKKL12-spm}]\label{Lmultiple}
Any polynomial in the basis of monomials $S$ indexing $M$, with coefficient
vector in the kernel of $M$, is a multiple of the implicit polynomial $p(x)$. 
\end{lemma}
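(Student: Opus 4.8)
The plan is to exploit the fact that each column of $M$ is the evaluation vector of a monomial $m_j = x^{s_j}$ along the parametric curve/surface, so that any linear combination of columns is the evaluation of the corresponding polynomial along the parameterization. Concretely, let $\lambda = (\lambda_1,\dots,\lambda_{|S|})^\top$ be a kernel vector of $M$, and form the polynomial $h(x) := \sum_{j=1}^{|S|} \lambda_j\, x^{s_j}$. Then the $k$-th entry of $M\lambda$ is exactly $h(f_0(\tau_k)/g_0(\tau_k),\dots,f_n(\tau_k)/g_n(\tau_k))$. Since $M\lambda = 0$, the rational function $t \mapsto h(f(t)/g(t))$ vanishes at all $\mu$ sample points $\tau_1,\dots,\tau_\mu$.

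Next I would argue that vanishing at sufficiently generic sample points forces $h(f(t)/g(t))$ to vanish identically on $\Omega$ (equivalently, on the parameter domain away from the poles of the $g_i$). This is where the genericity hypothesis on the $\tau_k$ is used: clearing denominators, $h(f(t)/g(t)) \cdot \prod_i g_i(t)^{d}$ (for $d$ large enough to absorb all negative exponents) is a fixed polynomial in $t_1,\dots,t_n$, of bounded degree determined by $S$ and the degrees of the $f_i,g_i$; a polynomial of bounded degree that vanishes at enough generic points must be the zero polynomial. Hence $h(f(t)/g(t)) \equiv 0$, i.e. $h$ lies in the ideal $I$ of algebraic relations among $x_0,\dots,x_n$. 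By the standing assumption that $I$ is principal, $I = \langle p \rangle$, so $p \mid h$ in $\RR[x_0,\dots,x_n]$, which is the claim.

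The remaining step is to confirm that the predicted support $S$ is large enough to actually contain the support of $h$ — but this is immediate, since $h$ is by construction a combination of exactly the monomials $x^{s_j}$ with $s_j \in S$, so $\mathrm{supp}(h) \subseteq S$ and no issue arises; the role of $S$ being a superset of the implicit support is only needed to guarantee that the kernel is nonempty (so that a nontrivial $p$ can appear), not for the divisibility conclusion itself.

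The main obstacle is making the phrase ``sampled sufficiently generically'' precise enough to justify the passage from vanishing at the $\mu$ chosen points to vanishing identically. One clean way is to note that the set of parameter points where a nonzero polynomial of the relevant bounded degree vanishes is a proper Zariski-closed subset of $\Omega$ (of measure zero), so if the $\tau_k$ avoid this set — which a generic choice does — then no nonzero polynomial of that degree can vanish at all of them once $\mu$ exceeds the dimension of the corresponding space of polynomials; since $\mu \ge |S|$ and the space of candidate polynomials is spanned by the $|S|$ monomials indexed by $S$, the count works out. I would state this genericity requirement explicitly (it is exactly the assumption flagged just before the lemma) and then the argument above goes through; the result is cited as \cite{EKKL12-spm}, so I would keep the exposition brief and refer there for the detailed genericity bookkeeping.
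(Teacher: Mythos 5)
Your argument is correct and is the natural one: a kernel vector yields a polynomial $h$ supported on $S$ that vanishes at the sampled parameter values, the genericity of the $\tau_k$ (made precise, as you note, via the fact that the image of $\lambda\mapsto h_\lambda(f(t)/g(t))\cdot\prod_i g_i(t)^d$ is a fixed space of dimension at most $|S|$, so $\mu\geq|S|$ generic evaluation functionals separate it) promotes this to identical vanishing along the parameterization, and then $h$ lies in the principal ideal $I=\langle p\rangle$ of algebraic relations, whence $p\mid h$. The paper does not reproduce a proof for this lemma (it is cited from \cite{EKKL12-spm}), but your reasoning matches exactly what is signalled by the standing genericity assumption stated just before the lemma and is the argument given in that reference.
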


As in~\cite{EKKL11}, one of the main difficulties is to build $M$ whose
corank, or kernel dimension, equals~1, i.e.\
its rank is~1 less than its column dimension.
Of course, we avoid values that make the denominators of the
parametric expressions close to~0.

For some inputs we obtain a matrix of $\mbox{corank}>1$
when the predicted polytope $Q$ is significantly larger than $P$.
It can be explained by the nature of our method: we rely on a
\emph{generic} resultant to express the implicit equation,
whose symbolic coefficients are then specialized to the actual coefficients of the parametric equations. 
If this specialization is not generic,
then the implicit equation divides the specialized resultant.  
The following theorem establishes the relation between the dimension
of the kernel of $M$ and the accuracy of the predicted support.
It remains valid even in the presence of base points.
In fact, it also accounts for them since then $P$ is expected to be
much smaller than $Q$. 

\begin{theorem}[\cite{EKKL12-spm}]\label{Tcorank1}
Let $P=N(p)$ be the Newton polytope of the implicit equation,
and $Q$ the predicted polytope.
Assuming $M$ has been built using sufficiently generic evaluation points,
the dimension of its kernel equals $r=\#\{ a \in \ZZ^{n+1} : a+P \subseteq Q \} =
\#\{ a \in \ZZ^{n+1} : N(x^a \cdot p) \subseteq Q \}$. 
\end{theorem}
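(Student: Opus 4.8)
The plan is to identify the kernel of $M$ with the space of polynomials supported on $S$ (the lattice points of $Q$) that are multiples of $p$, and then to count the dimension of that space combinatorially. First I would invoke Lemma~\ref{Lmultiple}: any coefficient vector $v$ in $\ker M$ corresponds to a polynomial $q_v$ supported on $S$ which is divisible by $p$, so $q_v = p \cdot h$ for some polynomial $h$. The Newton polytope condition $N(x^a\cdot p)\subseteq Q$ is the precise statement that the monomial $x^a$, times every monomial of $p$, still has its exponent in $S$; so each lattice point $a$ with $a+P\subseteq Q$ gives a valid ``shift'' of $p$ supported on $S$, hence a vector $x^a p|_{t=\tau_k}$ which lies in $\ker M$ (it vanishes at every $\tau_k$ because $p$ does). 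This produces $r$ vectors in the kernel, one for each such $a$.

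The second step is to show these $r$ vectors are linearly independent and that they span the whole kernel. Linear independence is immediate because the polynomials $x^a p$, for distinct $a$, have distinct ``leading'' behavior — more carefully, the $a$'s are distinct lattice points, so the supports $a+\mathrm{supp}(p)$ are genuinely different finite sets and no nontrivial linear combination can vanish as a polynomial (equivalently, $h\mapsto p\cdot h$ is injective on polynomials). For spanning: given any $v\in\ker M$, Lemma~\ref{Lmultiple} yields $q_v=p\cdot h_v$ with $q_v$ supported on $S$; one must argue that $h_v$ is supported on the set $\{a : a+P\subseteq Q\}$. This is where the genericity of the evaluation points $\tau_k$ enters — it guarantees that $\ker M$ consists \emph{exactly} of (evaluation vectors of) polynomials supported on $S$ that are divisible by $p$, with no spurious kernel elements coming from coincidental vanishing at the finitely many sample points; equivalently, the map sending a polynomial supported on $S$ to its vector of values at $\tau_1,\dots,\tau_\mu$ has kernel precisely the ideal multiples of $p$ within that monomial span. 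Combining the two inclusions gives $\dim\ker M = r$, and the two descriptions of $r$ coincide because $N(x^a p)=a+N(p)=a+P$.

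The main obstacle is the spanning direction, specifically controlling that $h_v$ cannot have monomials outside $\{a : a+P\subseteq Q\}$: a priori $q_v = p\cdot h_v$ is supported on $S$, but this only forces $\mathrm{supp}(p)+\mathrm{supp}(h_v)\subseteq S$, and one needs the sharper polytope containment $N(h_v)+P\subseteq Q$. The standard way around this is to pass to Newton polytopes: $N(q_v)\subseteq Q$ and $N(q_v)=N(p)+N(h_v)$ by additivity of Newton polytopes under multiplication (over a field of characteristic zero, no cancellation of the extreme monomials), so $N(h_v)+P = N(h_v)+N(p)=N(q_v)\subseteq Q$, whence every vertex — and therefore every lattice point — $a$ of $N(h_v)$ satisfies $a+P\subseteq Q$. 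This reduces the whole statement to Lemma~\ref{Lmultiple} plus the elementary fact that Newton polytopes are additive under polynomial multiplication, together with the genericity assumption on the $\tau_k$ that makes the evaluation map faithful on the relevant monomial space.
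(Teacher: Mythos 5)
Your proof is correct and matches the approach the paper points to: the paper does not reprove Theorem~\ref{Tcorank1} (it cites \cite{EKKL12-spm}), but the remark immediately following it states that the proof of Thm.~10 in \cite{EKKL12-spm} shows the coefficient vectors of the shifts $x^a p$ with $N(x^a p)\subseteq Q$ form a basis of $\ker M$ --- exactly the basis you construct, with the spanning direction correctly closed via the Newton-polytope additivity $N(p\cdot h)=N(p)+N(h)$. One small terminological slip: the phrase ``a vector $x^a p|_{t=\tau_k}$'' should read ``the coefficient vector of $x^a p$'', since kernel elements of $M$ are coefficient vectors (indexed by $S$), not evaluation vectors, though your parenthetical makes the intended meaning clear.
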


The formula for the corank of the matrix in the previous theorem 
also implies that the coefficients of the polynomials
$x^{a}p(x)$ such that $N(x^ap(x)) \subseteq Q$, form a basis of the kernel of $M$
(see \cite[Proof of Thm.~10]{EKKL12-spm}).
This observation will be useful in the upcoming Lemma~\ref{Lmembership}
but also implies the following corollary.

%
\begin{corollary}[\cite{EKKL12-spm}]\label{Cigcd}
Let $M$ be the matrix from \eqref{Eimatrix}, built with sufficiently
generic evaluation points, and suppose the specialization of the
polynomials in~\eqref{eq:genpolysystem} to the
parametric equations is sufficiently generic.
Let $\{\bm{c_1},\ldots, \bm{c_\lambda}\}$ be a basis of the kernel of $M$ and
$g_1(x),\ldots, g_\lambda(x)$ be the polynomials 
obtained as the inner product $g_i=\bm{c_i}\cdot \bm{m}$.
Then the greatest common divisor (GCD) of $g_1(x),\ldots, g_\lambda(x)$ equals the implicit equation
up to a monomial factor $x^e$.
\end{corollary}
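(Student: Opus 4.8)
The plan is to use the refined description of $\ker M$ recalled immediately before the corollary, which sharpens Lemma~\ref{Lmultiple}. By Theorem~\ref{Tcorank1} and that observation, the polynomials $x^{a}p(x)$, with $a$ ranging over the finite set $E:=\{a\in\ZZ^{n+1}:N(x^ap(x))\subseteq Q\}=\{a\in\ZZ^{n+1}:a+P\subseteq Q\}$, form a basis of $\ker M$; in particular $\lambda=r=|E|$. Since $p$ is a fixed nonzero polynomial, multiplication by $p$ is injective on polynomials, so the linear map $h\mapsto p\cdot h$ is a vector-space isomorphism from the subspace $V$ spanned over $\CC$ by the monomials $\{x^{a}:a\in E\}$ onto $\ker M$, because it carries the monomial basis of $V$ to the basis $\{x^{a}p\}_{a\in E}$ of $\ker M$. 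Hence the given basis $\{\bm{c_1},\dots,\bm{c_\lambda}\}$ of $\ker M$, i.e.\ the polynomials $g_i=\bm{c_i}\cdot\bm{m}$, can be written as $g_i=p\cdot h_i$, where each $g_i/p$ is a genuine polynomial (it lies in the image of multiplication by $p$, which is all of $\ker M$) and $\{h_1,\dots,h_\lambda\}:=\{g_1/p,\dots,g_\lambda/p\}$ is a basis of $V$, being the preimage of a basis under an isomorphism. In particular each $h_i$ is a polynomial supported on $E$.

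Next I would invoke that $\CC[x_0,\dots,x_n]$ is a UFD to factor out $p$: $\gcd(g_1,\dots,g_\lambda)=\gcd(p\,h_1,\dots,p\,h_\lambda)=p\cdot\gcd(h_1,\dots,h_\lambda)$, up to a nonzero constant. It then remains to prove the elementary combinatorial fact that the GCD of any basis of a subspace spanned by monomials $\{x^{a}:a\in E\}$ is the single monomial $x^{e}$, where $e=\min_{a\in E}a$ is the coordinatewise minimum. To see this, put $d=\gcd(h_1,\dots,h_\lambda)$; since the $h_i$ span $V$ and $x^{a}\in V$ for each $a\in E$, we have $d\mid x^{a}$ for every $a\in E$, so $d$ is, up to a constant, a monomial $x^{b}$ with $b\le a$ coordinatewise for all $a\in E$, hence $b\le e$. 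Conversely $x^{e}$ divides each $x^{a}$ with $a\in E$, hence divides every element of $V$, hence $x^{e}\mid d=x^{b}$ (up to a constant), so $e\le b$. Therefore $b=e$, and $\gcd(g_1,\dots,g_\lambda)=p(x)\,x^{e}$ up to a nonzero constant, which is exactly the implicit equation up to the claimed monomial factor $x^{e}$.

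The step I would be most careful about is the monomial bookkeeping forced by working over Laurent polynomials: under the standing assumption that all polytopes have been translated into the positive orthant, $p$, the monomials indexing the columns of $M$, and the shift exponents $a\in E$ all lie in $\NN^{n+1}$, so $e\ge 0$ and every divisibility argument takes place inside the honest polynomial ring $\CC[x_0,\dots,x_n]$ (complex, since the evaluation points $\tau_k$, and therefore the $g_i$, may be complex); no irreducibility of $p$ is needed, only unique factorization. I would also record explicitly that the two genericity hypotheses, on the evaluation points and on the specialization of the $c_{ij}$, enter only to license Theorem~\ref{Tcorank1} and the quoted basis description of $\ker M$ in terms of the $x^{a}p$; beyond that the argument is pure linear algebra and UFD divisibility, and the genuine content is the combinatorial lemma that the GCD of a basis of a monomial-generated subspace collapses to its ``corner'' monomial.
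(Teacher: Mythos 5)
The paper does not supply its own proof of this corollary, merely citing \cite{EKKL12-spm}, so a direct comparison is not possible; but your argument is correct, self-contained, and uses exactly the ingredients the paper points to in the sentence preceding the corollary. You correctly identify $\ker M$ (after the identification $\bm{c}\mapsto\bm{c}\cdot\bm{m}$) as the span of $\{x^a p : a\in E\}$, note that multiplication by $p$ is a linear isomorphism from $V=\mathrm{span}\{x^a : a\in E\}$ onto this space, pull the given basis back to a basis $\{h_i\}$ of $V$, and reduce to the purely combinatorial lemma that the GCD of any basis of a monomial-spanned subspace is the ``corner'' monomial $x^e$ with $e=\min_{a\in E}a$ taken coordinatewise. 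The two-sided divisibility argument for that lemma is complete: $d\mid x^a$ for all $a\in E$ forces $d$ to be a monomial $x^b$ with $b\le e$, while $x^e$ divides every element of $V$, hence divides $d$. The bookkeeping you flag -- that the standing positive-orthant assumption puts $E\subseteq\NN^{n+1}$ so all divisibility happens in $\CC[x_0,\dots,x_n]$, and that no irreducibility of $p$ is needed, only that $\CC[x]$ is a UFD -- is exactly the right level of care. One small cosmetic point: $\ker M$ lives in $\CC^{|S|}$, not in $\CC[x]$, so strictly speaking your isomorphism is $V\to W$ where $W=\{\bm{c}\cdot\bm{m}:\bm{c}\in\ker M\}$; but this identification is standard here and does not affect the argument.
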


\begin{remark}\label{Rextrmonom}
The extraneous monomial factor $x^e$ in the previous corollary 
is always a constant when the predicted polytope $Q$ is of the form 
$Q=P+R$ and, as we assume throughout this paper, it is translated to the positive orthant
and touches the coordinate axis.
However, it is possible that $Q$ strictly contains $P+R$
and the extraneous polytope $R$ is a point $e\in \RR^{n+1}$,  
or it is the Minkowski sum of point $e$ and a polytope $R'$ which touches the axis.
Let $\sum_\beta c_\beta x^\beta$ be the GCD of the polynomials $g_i$ in Corollary \ref{Cigcd},
and let $\gamma=(\gamma_0,\ldots,\gamma_n)$, where $\gamma_i=\min_\beta(\beta_i)$, $i=0,\ldots,n$.
We can efficiently remove the extraneous monomial $x^e$ by 
dividing $\sum_\beta c_\beta x^\beta$ with $x^\gamma$, i.e. the GCD of monomials $x^\beta$.
\end{remark}

We modify slightly the construction of the interpolation matrix $M$ to 
obtain a matrix denoted $M(x)$ which is numeric except for its last row.
This matrix will be crucial in formulating our geometric predicates.

Fix a set of \emph{generic} distinct values $\tau_k, ~ k = 1,\dots,|S|-1$ and recall that 
$m_j |_{t=\tau_k}$
denotes the $j$-th monomial $m_j$ evaluated at $\tau_k$.
Let $M'$ be the $|S|-1\times{}|S|$ numeric matrix obtained by 
evaluating $\bm{m}$ at $|S|-1$ points $\tau_k,\, k=1\ldots,|S|-1$:
\begin{equation}\label{Etempmatrix}
 M' = \begin{bmatrix}
m_1 |_{t=\tau_1} & \cdots & m_{|S|} |_{t=\tau_1}\\[3pt]
\vdots                     & \cdots &  \vdots   \\[3pt]
m_1 |_{t=\tau_{|S|-1}} & \cdots & m_{|S|} |_{t=\tau_{|S|-1}} 
\end{bmatrix},
\end{equation}
and $M(x)$ be the $|S|\times{}|S|$ matrix obtained by appending  the row vector $\bm{m}$ 
to matrix $M'$:
\begin{equation}\label{Ematx}
M(x) = \begin{bmatrix}
M' \\[2pt]
\bm{m}
\end{bmatrix}.
\end{equation}

Algorithm~\ref{Amatrixx} summarizes the construction of matrix $M(x)$.
\begin{algorithm}[h] \label{Amatrixx}
\caption{Matx} 
\dontprintsemicolon
\Input{Parameterization
$x_i= f_i(t)/g_i(t),\, i=0,\dots,n$,\\
Predicted implicit polytope $Q$.\\
}
\Output{Matrix $M(x)$}

\medskip

$\NN^{n+1} \supseteq S \leftarrow \mbox{lattice points in}~Q$\;
\lForEach{$s_i \in S$}{$m_i \leftarrow x^{s_i}$} \hspace{1em} \tcp{$x:=(x_0,\ldots,x_n)$}\;
$\bm{m} \leftarrow (m_1,\ldots,m_{|S|})$  \hspace{1em} \tcp{vector of monomials in $x$}\; 
Initialize  $|S|-1 \times |S|$ matrix $M'$: \;
\For{$i \leftarrow 1$ \KwTo $|S|-1$}{
select~$\tau_i\in\CC^{n+1}$ \\
\For{$j \leftarrow 1$ \KwTo $|S|$}{ $M_{ij} \leftarrow m_j|_{t=\tau_i}$}
}
Append row $\bm{m}$ to $M'$: \;
$M(x) \leftarrow \begin{bmatrix} M' \\[1pt] \bm{m} \end{bmatrix}$\;
\smallskip
\Return $M',\, M(x)$\;
\end{algorithm}

Given a point $q\in \RR^{n+1}$, let $M(q)$ be the matrix 
$M(q) = \begin{bmatrix} 
M' \\[2pt]
\bm{m}|_{x=q}
\end{bmatrix}$,
where $\bm{m} |_{x=q}$ denotes the vector $\bm{m}$ of predicted monomials evaluated at point $q$.
We assume that $q$ does not coincide with any 
of the points $x(\tau_k),\, k=1,\ldots,|S|-1$ used to build matrix $M'$,
which implies that the rows of matrix $M(q)$ are distinct.
This can be checked efficiently. Obviously, when $q$ lies on the hyper-surface $p(x)=0$,
matrix $M(q)$ is equivalent to matrix $M$ in \eqref{Eimatrix} 
in the sense that they both have the same kernel.

\begin{remark}\label{Rsamerank}
 Let $M$ be a matrix as in \eqref{Eimatrix} and $\bm{c}$ be a vector in the kernel of $M$.
 Then the vector $\lambda\bm{c}$, for any $0\neq \lambda \in \RR$, is also in the kernel of $M$ because
 the implicit polynomial is defined up to a non-zero scalar multiple.
 This means that we can set an arbitrary coordinate of $\bm{c}$ equal to 1.
 As a consequence the matrices $M'$ and $M$ (and from the discussion above also $M(q)$, where $p(q)=0$),
 have the same kernel of corank $r$, where $r$ is given in Theorem \ref{Tcorank1}.
\end{remark}

Matrix $M(x)$ has an important property as shown in the following
\begin{lemma}\label{Lmatrixx}
Assuming $M'$ is of full rank, the determinant of matrix $M(x)$ equals 
the implicit polynomial $p(x)$ up to a constant.
\end{lemma}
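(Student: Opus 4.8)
The plan is to expand the determinant of $M(x)$ along its last row, which is the symbolic vector $\bm{m}=(m_1,\dots,m_{|S|})$, and to show that the resulting polynomial in $x$ is, up to a nonzero constant, the implicit polynomial $p(x)$. Concretely, cofactor expansion along the last row gives $\det M(x) = \sum_{j=1}^{|S|} (-1)^{|S|+j}\, m_j \, \Delta_j$, where $\Delta_j$ is the $(|S|-1)\times(|S|-1)$ minor of $M'$ obtained by deleting column $j$. The $\Delta_j$ are fixed scalars (functions of the chosen generic points $\tau_k$), so $\det M(x)$ is a polynomial supported on $S$, i.e.\ it has the form $\bm{c}\cdot\bm{m}$ with $\bm{c}=((-1)^{|S|+1}\Delta_1,\dots,(-1)^{|S|+|S|}\Delta_{|S|})$.

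Next I would identify the coefficient vector $\bm{c}$ as an element of the kernel of $M'$. Indeed, for each row index $k$ of $M'$, the matrix obtained from $M(x)$ by substituting $x(\tau_k)$ into the last row has two equal rows, hence determinant zero; but that determinant is exactly $\bm{c}\cdot \bm{m}|_{t=\tau_k}$, which is the $k$-th entry of $M'\bm{c}$. Thus $M'\bm{c}=0$. Since $M'$ is assumed to have full rank $|S|-1$, its kernel is one-dimensional, so $\bm{c}$ is the unique (up to scalar) kernel vector; moreover $\bm{c}\ne 0$ precisely because $M'$ has full rank (some maximal minor $\Delta_j$ is nonzero). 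By Remark~\ref{Rsamerank}, $M'$ and $M$ have the same kernel, so $\bm{c}$ lies in the kernel of $M$ as well, and then Lemma~\ref{Lmultiple} tells us that $\bm{c}\cdot\bm{m}$ is a multiple of the implicit polynomial $p(x)$. Finally, since the full-rank hypothesis forces the kernel of $M$ to be one-dimensional, Theorem~\ref{Tcorank1} gives $r=1$, which means the only lattice-point shift $a$ with $a+P\subseteq Q$ is $a=0$; hence the kernel is spanned by the coefficient vector of $p(x)$ itself, and $\det M(x)=\bm{c}\cdot\bm{m}$ equals $p(x)$ up to the nonzero constant coming from the normalization of $\bm{c}$.

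I would close by noting the (harmless) edge cases: one must check that $\bm{c}$ is genuinely nonzero, which is immediate from $\mathrm{rank}\,M'=|S|-1$; and one should remark that the statement is ``up to a constant'' precisely because the kernel vector, and hence the determinant's coefficient vector, is only defined up to scaling — the specific constant is $(-1)^{|S|+j_0}\Delta_{j_0}/(\text{corresponding coefficient of }p)$ for any $j_0$ with $\Delta_{j_0}\neq 0$. The main obstacle, such as it is, is not any single hard computation but rather making the logical chain airtight: one must invoke the full-rank hypothesis twice — once to get one-dimensionality of $\ker M'$ (so that $\bm c$ is the kernel vector) and once, via Theorem~\ref{Tcorank1} with $r=1$, to rule out an extraneous monomial factor so that the multiple of $p(x)$ is exactly $p(x)$ and not $x^a p(x)$. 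Everything else is the standard cofactor-expansion identity $\det\begin{bmatrix}M'\\ \bm m|_{t=\tau_k}\end{bmatrix}=0$ for a repeated row.
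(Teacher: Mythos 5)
Your proof is correct and reaches the same conclusion, but it travels the determinant identity in the opposite direction from the paper. The paper starts from the (essentially unique) kernel vector $\bm{c}$ of $M'$, normalizes $c_{|S|}=1$, solves for the remaining entries via Cramer's rule on the maximal submatrix $M''$, and then recognizes the resulting sum $\bm{m}\cdot\bm{c}$ as $\pm\det M(x)/\det M'_{-|S|}$. You instead cofactor-expand $\det M(x)$ along its last symbolic row to obtain a polynomial $\bm{c}\cdot\bm{m}$, then prove this coefficient vector $\bm{c}$ of signed maximal minors lies in $\ker M'$ by the repeated-row argument (substituting each $\tau_k$ into the last row gives a singular matrix), and finally invoke $\operatorname{rank}M'=|S|-1$ to identify it as the unique kernel direction. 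These are two faces of the same Laplace/Cramer machinery, but your version is arguably cleaner: it avoids the slightly awkward bookkeeping of $M''_k$ versus $M'_{-k}$ and the ``up to reordering/sign'' disclaimers, and it makes transparent why $\det M(x)$ is automatically a kernel polynomial. Both proofs rely on Remark~\ref{Rsamerank}, Theorem~\ref{Tcorank1}, and Lemma~\ref{Lmultiple} in the same way, and both quietly pass over the same delicate point at the end: knowing $r=1$ guarantees a unique shift $a$ with $a+P\subseteq Q$, but that $a=0$ (equivalently, that the kernel polynomial is $p$ and not $x^a p$) really rests on the standing normalization of $Q$ and $P$ to the positive orthant touching the axes; you assert $a=0$ a bit too quickly, but the paper's appeal to ``Theorem~\ref{Tcorank1} and Corollary~\ref{Cigcd}'' is no more explicit, so this is a shared, not a new, gap.
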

\begin{proof}
Suppose that $M'$ is of full rank equal to $|S|-1$. Then there exists a non-singular $(|S|-1)\times(|S|-1)$ 
submatrix of $M'$. Without loss of generality we assume that is is the submatrix $M''=M'_{-|S|}$
obtained from $M'$ by removing its last column. 
By Remark \ref{Rsamerank},
$M'$ and $M$ have the same kernel
consisting of a single vector $\bm{c}=(c_1,\ldots,c_{|S|})$, where we
can assume that $c_{|S|}=1$. Then
\begin{align}\label{Ematxproof}
\nonumber
M'\cdot \begin{bmatrix} c_1\\ \vdots \\ c_{|S|-1}\\ 1 \end{bmatrix} = \bm{0}
\Leftrightarrow
\begin{bmatrix}
m_1 |_{t=\tau_1} & \cdots & m_{|S|} |_{t=\tau_1}\\[3pt]
\vdots                     & \cdots &  \vdots   \\[3pt]
m_1 |_{t=\tau_{|S|-1}} & \cdots & m_{|S|} |_{t=\tau_{|S|-1}} 
\end{bmatrix} 
\cdot 
\begin{bmatrix} c_1\\ \vdots \\ c_{|S|-1}\\ 1 \end{bmatrix}= \bm{0}
\Leftrightarrow &\\
\begin{bmatrix}
m_1 |_{t=\tau_1} & \cdots & m_{|S|-1} |_{t=\tau_1}\\[3pt]
\vdots                     & \cdots &  \vdots   \\[3pt]
m_1 |_{t=\tau_{|S|-1}} & \cdots & m_{|S|-1} |_{t=\tau_{|S|-1}} 
\end{bmatrix} 
\cdot 
\begin{bmatrix} c_1\\ \vdots \\ c_{|S|-1}\\ 1 \end{bmatrix}
= 
-\begin{bmatrix}
m_{|S|} |_{t=\tau_1}\\
\vdots \\
m_{|S|} |_{t=\tau_{|S|-1}}
\end{bmatrix},
\end{align}
which, by applying Cramer's rule yields 
\begin{equation}\label{Ecvec}
 c_k=\frac{\det M''_{k}}{\det M''}, \quad k=1,\ldots,|S|-1,
\end{equation}
where $M''_k$ is the matrix obtained by replacing the $k$th column of $M''$ by the $|S|$th column of $M'$,
which plays the role of the constant vector in \eqref{Ematxproof}. 
Note that $M''_k$ equals (up to reordering of the columns)  $M'_{-k}$,  where  $M'_{-k}$ is the matrix 
obtained by removing the $k$th column of $M'$. Hence, 
$\det M''_k$ equals (up to sign)  $\det M'_{-k}$.

Now, the assumption that $M'$ is of full rank in conjunction with Theorem \ref{Tcorank1} 
and Corollary \ref{Cigcd} implies that 
\begin{align*}
 p(x)=\bm{m}\cdot\bm{c}=\sum_{i=1}^{|S|} m_i \cdot c_i = \sum_{i=1}^{|S|-1} m_i \cdot c_i + m_{|S|},
\end{align*}
which combined with \eqref{Ecvec} gives
\begin{align*}
 p(x)&= \sum_{i=1}^{|S|-1} m_i \cdot \frac{\det M''_{k}}{\det M''} + m_{|S|}=
 \pm\sum_{i=1}^{|S|-1} m_i \cdot\frac{\det M'_{-k}}{\det M'_{-|S|}} + m_{|S|}\\
 &=\pm\frac{1}{\det M'_{-|S|}}\sum_{i=1}^{|S|-1} m_i \cdot \det M'_{-k} + m_{|S|}=
 \pm\frac{1}{\det M'_{-|S|}}\sum_{i=1}^{|S|} m_i \cdot \det M'_{-k} \\
 &=  \pm\frac{\det M(x)}{\det M'_{-|S|}}. 
\end{align*}
\end{proof}


\section{Geometric Operations}\label{Soperations}

In this section we formulate certain elementary geometric operations 
on the hyper-surface defined by $p(x)$
as matrix operations.
In particular, we focus on membership and sidedness.


\paragraph{Membership predicate.}

Given a parameterization $x_i=f_i(t)/g_i(t),~ i=0,\ldots,n$, 
and a query point $q\in \RR^{n+1}$, we want to decide if $p(q)=0$ is true or not,
where $p(x)$ is the implicit equation of the parameterized hyper-surface.
Our goal is to formulate this test using the interpolation matrix in \ref{Ematx}.

Working with matrices instead of polynomials, we cannot 
utilize Corollary \ref{Cigcd} and Remark \ref{Rextrmonom} to process the kernel 
polynomials.
Thus, a kernel polynomial might be of the form $x^e p(x)$.
To avoid reporting a false positive when evaluating such a polynomial
at a query point having zero coordinates, 
we restrict membership testing to points $q\in (\RR^*)^{n+1}$, where $\RR^*=\RR\setminus\{0\}$.

\begin{lemma}\label{Lmembership}
Let $M(x)$ be as in \eqref{Ematx} and $q=(q_0,\ldots,q_n)$ be a query  point in $(\RR^*)^{n+1}$.
Then $q$ lies on the hyper-surface defined by $p(x)=0$ if and only if $\mbox{corank}(M(q))=\mbox{corank}(M')$. 
\end{lemma}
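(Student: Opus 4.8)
The plan is to reduce the membership question to the rank behavior of $M(x)$ when we specialize $x$ to the query point $q$, using Lemma~\ref{Lmatrixx} together with the structure of the kernel basis described after Theorem~\ref{Tcorank1}. First I would recall that $M(q)$ is obtained from the full-rank-by-assumption matrix $M'$ by appending the single row $\bm{m}|_{x=q}$; hence $\mathrm{corank}(M(q))$ is either $\mathrm{corank}(M')$ (if the new row lies in the row space of $M'$, equivalently is orthogonal to the kernel of $M'$) or $\mathrm{corank}(M')-1$ (if it does not). So the statement to prove becomes: the appended row is in the row space of $M'$ if and only if $p(q)=0$. I would handle the generic case $\mathrm{corank}(M')=1$ first, where $M'$ has a one-dimensional kernel spanned by a vector $\bm{c}$ which, by Theorem~\ref{Tcorank1} and Corollary~\ref{Cigcd} (as used in the proof of Lemma~\ref{Lmatrixx}), gives $\bm{m}\cdot\bm{c}=x^e p(x)$ up to a constant. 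Then $\bm{m}|_{x=q}\cdot\bm{c}=q^e p(q)$, and since $q\in(\RR^*)^{n+1}$ the factor $q^e\neq 0$; thus the appended row is orthogonal to $\bm{c}$ — i.e.\ $M(q)$ drops corank back to that of $M'$ — exactly when $p(q)=0$. Equivalently one can invoke Lemma~\ref{Lmatrixx} directly: $\det M(q)=\pm \det(M'_{-|S|})\, p(q)$ (up to the $q^e$ subtlety), and $\det M'_{-|S|}\neq 0$ by full rank, so $M(q)$ is singular iff $p(q)=0$; combined with the fact that appending a row can lower corank by at most one, this pins down $\mathrm{corank}(M(q))$ to exactly $\mathrm{corank}(M')$ when $p(q)=0$ and to $\mathrm{corank}(M')-1$ otherwise.

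For the general case $r=\mathrm{corank}(M')\geq 1$, I would argue via the kernel rather than the determinant. The observation recorded after Theorem~\ref{Tcorank1} says the kernel of $M$ (hence of $M'$, by Remark~\ref{Rsamerank}) is spanned by the coefficient vectors of the polynomials $x^a p(x)$ with $N(x^a p(x))\subseteq Q$; call these basis polynomials $h_1(x),\dots,h_r(x)$, each a monomial multiple of $p(x)$. The appended row $\bm{m}|_{x=q}$ is orthogonal to the whole kernel of $M'$ iff $h_\ell(q)=0$ for all $\ell$; but each $h_\ell(q)=q^{a_\ell}p(q)$ with $q^{a_\ell}\neq 0$ since $q\in(\RR^*)^{n+1}$, so this holds iff $p(q)=0$. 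When $p(q)=0$, every kernel vector of $M'$ is still a kernel vector of $M(q)$ and no new ones are created (adding a row cannot increase corank), so $\mathrm{corank}(M(q))=\mathrm{corank}(M')$. When $p(q)\neq 0$, the appended row is not orthogonal to at least one kernel vector, so the kernel strictly shrinks; since appending one row shrinks corank by at most one, $\mathrm{corank}(M(q))=\mathrm{corank}(M')-1<\mathrm{corank}(M')$. This establishes the equivalence in both directions.

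The main obstacle I anticipate is bookkeeping around the extraneous monomial factor $x^e$: the kernel polynomials need not equal $p(x)$ but only $x^e p(x)$ (or more generally $x^{a}p(x)$), and a priori such a polynomial could vanish at $q$ for the spurious reason that some coordinate of $q$ is zero. This is precisely why the hypothesis $q\in(\RR^*)^{n+1}$ is imposed, as flagged in the paragraph preceding the lemma, and the proof must use it explicitly at the step where one divides out $q^{a_\ell}$. A secondary, more routine point is justifying that the genericity of the evaluation points $\tau_k$ — already assumed for Theorem~\ref{Tcorank1}, Corollary~\ref{Cigcd}, and Remark~\ref{Rsamerank} — transfers verbatim to $M'$ and $M(q)$; this follows since $M'$ is just $M$ with one fewer row (and with an arbitrary coordinate of the kernel vectors normalized to $1$ as in Remark~\ref{Rsamerank}), and since $q$ is assumed distinct from the sample points $x(\tau_k)$ so that $M(q)$ has distinct rows. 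Everything else is the standard linear-algebra fact that the corank of a matrix changes by at most one under appending a single row, together with the orthogonality characterization of when it changes at all.
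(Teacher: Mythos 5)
Your proposal is correct and follows essentially the same route as the paper's proof: both reduce the equivalence to whether the appended row $\bm{m}|_{x=q}$ is orthogonal to $\ker(M')$, both invoke the description of $\ker(M')$ as spanned by coefficient vectors of $x^a p(x)$ (from the discussion after Theorem~\ref{Tcorank1}), and both use $q\in(\RR^*)^{n+1}$ to strip off the monomial factors $q^a$ and conclude $p(q)=0$. Your version is, if anything, slightly tighter at the final step of the backward direction: by testing each basis vector $h_\ell=x^{a_\ell}p(x)$ individually you immediately get $q^{a_\ell}p(q)=0$ with $q^{a_\ell}\neq 0$, whereas the paper writes the generic kernel vector as $\sum_a\lambda_a q^a p(q)=0$ with ``$\lambda_a$ not all zero'' and leaves implicit that one must pick $\bm{c}$ so that $\sum_a\lambda_a q^a\neq 0$ (e.g.\ a basis vector); your warm-up corank-$1$ determinant argument and the explicit ``appending one row changes corank by at most one'' observation are also nice but inessential additions.
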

\begin{proof}
For every point $q$, since $M'$ is an $(|S|-1) \times |S|$ submatrix of
the $|S|\times|S|$ matrix $M(q)$,
we have that $\mbox{rank}(M(q)) \geq \mbox{rank}(M')$ 
which implies that
$\mbox{corank}(M(q)) \leq \mbox{corank}(M')$.
Moreover, it holds that
\begin{equation}\label{Ekernels}
\mbox{kernel}(M(q))\subseteq \mbox{kernel}(M').
\end{equation}
 
\smallskip
\noindent $(\rightarrow)$ 
 Assume that $q$ lies on the hyper-surface defined by $p$, hence $p(q)=0$.
 Then by Remark \ref{Rsamerank} the matrices $M(q)$ and $M'$ have the same corank.
 
\smallskip
\noindent $(\leftarrow)$ 
 Suppose that $\mbox{corank}(M(q)) = \mbox{corank}(M')$.
 Then the last row $\bm{m} |_{x=q}$ of $M(q)$ is linearly dependent on the first $|S|-1$ rows, 
 hence there exist $l_k\in \RR, k=1,\ldots,|S|$, not all zero,  such that
 $\bm{m}|_{x=q}=\sum_{k=1}^{|S|} l_k \bm{m} |_{t=\tau_k}$. 
 Let $\bm{c}\in \mbox{kernel}(M')$. Then
 $\bm{m}|_{x=q} \cdot \bm{c}=(\sum_{i=k}^{|S|} l_k \bm{m} |_{t=\tau_k})\cdot \bm{c}=
 \sum_{i=k}^{|S|} l_k (\bm{m} |_{t=\tau_k}\cdot \bm{c}) = 0$, so $\bm{c}\in \mbox{kernel}(M(q)$,
 which, given relation \eqref{Ekernels}, implies that $M'$ and $M(q)$ have the same kernel.
 
 Every vector $\bm{c}$ in the kernel of $M'$, hence, also of $M(q)$,  is a linear combination  of 
 the coefficient vectors of the polynomials $x^a p(x)$, 
 where $a\in \ZZ^{n+1}$ such that $N(x^a p(x))\subseteq Q$, 
 (see also the discussion following Theorem~\ref{Tcorank1}).
 So we have $\bm{m} |_{x=q}\cdot \bm{c}=\sum_a \lambda_a q^a p(q) =0$, 
 where $\lambda_a \in \RR$ are not all equal to zero,
 which, since $q\in (\RR^*)^{n+1}$, implies that $p(q)=0$.
\end{proof}

Lemma~\ref{Lmembership} readily yields Algorithm~\ref{Amembership} that reduces the membership
test $p(q)=0$ for a query point $q\in \RR^{n+1}$, to the comparison of the ranks of the matrices $M'$ and $M(q)$.

\begin{algorithm}[ht] \label{Amembership}
\caption{Membership} 
\dontprintsemicolon
\Input{Parameterization
$x_i= f_i(t)/g_i(t),\, i=0,\dots,n$,\\
Predicted implicit polytope $Q$,\\
Query point $q\in (\RR^*)^{n+1}$.\;
}
\Output{0 if  $p(q)=0$, 1 otherwise.}

\medskip
$M', M(x) \leftarrow$ Matx$( f_i(t)/g_i(t),\, i=0,\dots,n;\,Q)$\;
\lIf{$\mbox{corank}(M(q))=\mbox{corank}(M')$}{$\alpha \leftarrow 0$}\;
\Else{$\alpha \leftarrow 1$}\;
\Return $\alpha$\;
\end{algorithm}

\paragraph{Sidedness predicate.}

Let us now consider the sidedness operation for the hyper-surface $p(x)=0$, which we define using 
the sign of the evaluated polynomial $p(q)$, for $q\in \RR$:
\begin{definition}\label{Dsidedness}
Given a hyper-surface in $\RR^{n+1}$ with defining equation $p(x)\in \RR[x]$, and a 
point $q\in \RR^{n+1}$ such that $p(q)\neq 0$, 
we define $\mbox{side}(q)=\mbox{sign}(p(q)) \in \{-1,1\}$.
\end{definition}

See Figure \ref{Ffolium} for an example of applying Definition \ref{Dsidedness} to 
the folium of Descartes curve defined by $y^3-3xy+x^3=0$.
\begin{figure}[ht] 
\includegraphics[scale=0.3]{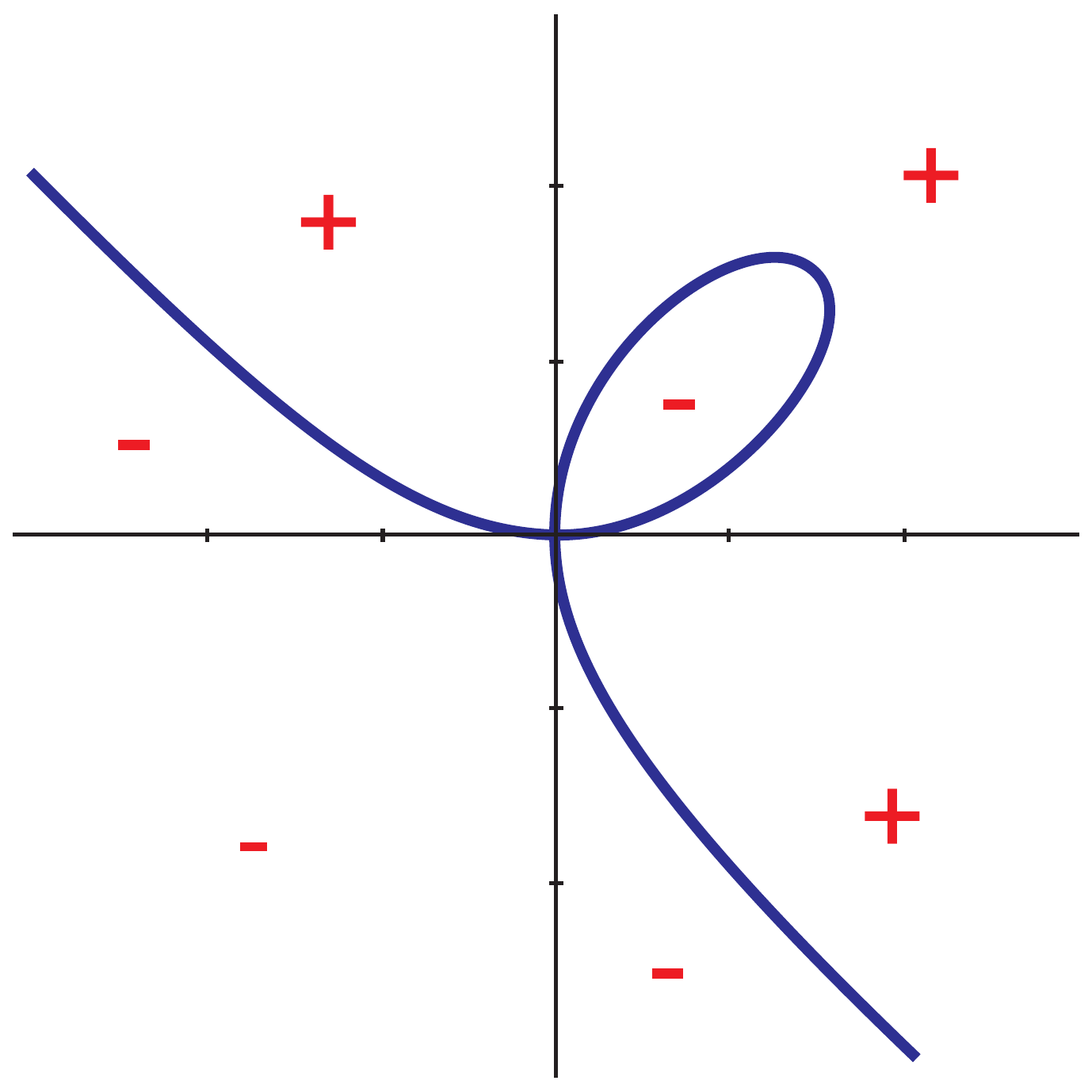}
\centering\caption{The sign of the polynomial defining the folium of Descartes.}
\label{Ffolium}
\end{figure} 

We will use matrix $M(x)$ defined in \eqref{Ematx} to 
reduce sidedness in the sense of Definition \ref{Dsidedness},
to the computation of the sign of a numerical determinant.
First we show that this determinant is non-zero for relevant inputs.

\begin{lemma}\label{Laux-sided}
 Suppose that the predicted polytope $Q$ contains only one translate of the implicit polytope $P$. 
 Let $M(x)$ be a matrix as in \eqref{Ematx} and let $q\in (\RR^*)^{n+1}$ such that $p(q)\neq 0$. 
 Then $\det M(q) \neq 0$.
\end{lemma}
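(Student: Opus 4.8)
The plan is to combine Lemma~\ref{Lmatrixx} with the hypothesis that $Q$ contains only one translate of $P$. First I would invoke Theorem~\ref{Tcorank1}: since $Q$ contains exactly one translate of $P$, the kernel of the matrix $M$ from \eqref{Eimatrix} has dimension $r=1$. By Remark~\ref{Rsamerank}, the numeric matrix $M'$ shares this kernel, so $\mathrm{corank}(M')=1$, i.e.\ $M'$ has full rank $|S|-1$. This is precisely the hypothesis needed to apply Lemma~\ref{Lmatrixx}, which then tells us that $\det M(x) = \pm (\det M'_{-|S|})\, p(x)$ as polynomials in $x$, where $\det M'_{-|S|}\neq 0$ because $M'$ has full rank (one may relabel columns so that the nonsingular maximal submatrix is $M'_{-|S|}$, exactly as in the proof of Lemma~\ref{Lmatrixx}).

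Next I would simply evaluate this polynomial identity at $x=q$. Since $q\in(\RR^*)^{n+1}$ and $p(q)\neq 0$ by hypothesis, and since $\det M'_{-|S|}$ is a fixed nonzero constant independent of $q$, we get
\[
\det M(q) = \pm (\det M'_{-|S|})\, p(q) \neq 0,
\]
which is the claim. The one technical point to be careful about is that Lemma~\ref{Lmatrixx} is stated as an identity ``up to a constant'', and $M(q)$ is obtained from $M(x)$ by substituting the numerical coordinates of $q$ into the symbolic last row $\bm m$; I would note explicitly that substitution commutes with taking the determinant (the determinant is a polynomial in the matrix entries), so evaluating $\det M(x)$ at $x=q$ indeed yields $\det M(q)$.

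I do not expect a serious obstacle here: the lemma is essentially a corollary of Lemma~\ref{Lmatrixx} once the genericity/uniqueness hypothesis is translated into ``$M'$ has full rank'' via Theorem~\ref{Tcorank1} and Remark~\ref{Rsamerank}. The only place demanding a little care is the chain of reductions establishing $\mathrm{corank}(M')=1$: one must use that $M$ is built with sufficiently generic evaluation points (an assumption carried throughout this section) so that Theorem~\ref{Tcorank1} applies and gives $r = \#\{a\in\ZZ^{n+1}: a+P\subseteq Q\}=1$ under the stated hypothesis on $Q$. With that in hand, the rest is the evaluation argument above.
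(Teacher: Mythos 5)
Your proof is correct, but it takes a genuinely different route from the paper's. You and the paper agree on the setup: the uniqueness hypothesis on $Q$ plus Theorem~\ref{Tcorank1} and Remark~\ref{Rsamerank} give $\mathrm{corank}(M')=1$, i.e.\ $M'$ has full rank. From there, the paper argues via Lemma~\ref{Lmembership}: since $p(q)\neq 0$, the equivalence in that lemma forces $\mathrm{corank}(M(q))\neq\mathrm{corank}(M')=1$, and since $\mathrm{corank}(M(q))\le\mathrm{corank}(M')$, one concludes $\mathrm{corank}(M(q))=0$, hence $\det M(q)\neq 0$. You instead invoke Lemma~\ref{Lmatrixx} to get the polynomial identity $\det M(x)=\pm(\det M'_{-|S|})\,p(x)$ with a nonzero constant, and evaluate at $x=q$. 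Both arguments are sound; yours is arguably more direct and has the stylistic advantage that it already delivers the proportionality $\det M(q)=c\,p(q)$ that is exactly what the subsequent sidedness theorem needs, whereas the paper's route keeps the two steps (nonvanishing here, sign consistency in the theorem) separate. One small remark: you quote the hypothesis $q\in(\RR^*)^{n+1}$ but your argument never actually uses it, since Lemma~\ref{Lmatrixx} as stated has no monomial factor $x^e$. In the paper's route that hypothesis does real work (inside Lemma~\ref{Lmembership}), and it also guards against the case, flagged in Remark~\ref{Rextrmonom}, where the kernel polynomial is $x^e p(x)$ rather than $p(x)$; so it is prudent that the lemma retains it, even though your particular chain of reductions does not touch it.
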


\begin{proof}
 Since the predicted polytope $Q$ contains only one translate of the implicit polytope $P$, 
 Theorem \ref{Tcorank1} implies that  $\mbox{corank}(M)=1$ and by Remark \ref{Rsamerank} 
 this means that $\mbox{corank}(M')=1$, where matrices $M,M'$ 
 are defined in \eqref{Eimatrix},\eqref{Etempmatrix} respectively.
 Then since $p(q)\neq 0$, from Lemma \ref{Lmembership} we have that
 $\mbox{corank}(M')\neq \mbox{corank}(M(q)$, which implies that
 the matrix $M(q)$ is of full rank equal to $|S|$.
 Hence $\det M(q)\neq 0$.
\end{proof} 

Next we show that, given matrix $M(x)$ and a point $q\in (\RR^*)^{n+1}$ such that $p(x)\neq 0$,
the sign of $\det(M(q))$ is consistent with $\mbox{side}(q)$ in the following sense:
for every pair of query points $q_1,q_2$, whenever $\mbox{side}(q_1)=\mbox{side}(q_2)$, 
we have that $\mbox{sign}(\det M(q_1))=\mbox{sign}(\det M(q_2))$.

The following theorem is the basic result for our approach. 

\begin{theorem}
Let $M(x)$ be as in \eqref{Ematx} and $q_1, q_2$ be two query  points in $(\RR^*)^{n+1}$ 
not lying on the hyper-surface defined by $p(x)=0$.
Assuming that the predicted polytope $Q$ contains only one translate of the implicit polytope $P$,
then  $\mbox{side}(q_1)=\mbox{side}(q_2)$
if and only if $\mbox{sign}(\det M(q_1))=\mbox{sign}(\det M(q_2))$,
where sign$(\cdot)$ is an integer in $\{-1,1\}$. 
\end{theorem}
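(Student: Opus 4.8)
The plan is to reduce the whole statement to the polynomial identity of Lemma~\ref{Lmatrixx}, which says that $\det M(x)$ equals the implicit polynomial up to a fixed nonzero scalar. First I would verify that the hypotheses of that lemma are in force. The assumption that $Q$ contains only one translate of $P$ means, in the notation of Theorem~\ref{Tcorank1}, that $r=1$; combined with Remark~\ref{Rsamerank} this gives $\mbox{corank}(M')=1$, i.e.\ $M'$ has full rank $|S|-1$. Hence Lemma~\ref{Lmatrixx} applies and produces a nonzero constant $c$ (concretely $c=\pm\det M'_{-|S|}$ in the notation of that proof) with $\det M(x)=c\,p(x)$ as an identity of polynomials in $x_0,\dots,x_n$. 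This $c$ is a nonzero \emph{real} number once the evaluation points $\tau_k$ are taken real, so that $M'$, and therefore $M(q)$ for $q\in\RR^{n+1}$, is a real matrix.

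Next I would record that both determinants occurring in the statement are nonzero real numbers, so that $\mbox{sign}(\det M(q_i))$ really is a well-defined element of $\{-1,1\}$ as asserted: nonzeroness is exactly Lemma~\ref{Laux-sided}, which applies since $Q$ contains a unique translate of $P$ and $p(q_i)\neq 0$ for $i=1,2$, while reality follows from the choice of real evaluation data above (indeed the phrasing ``$\mbox{sign}(\cdot)$ is an integer in $\{-1,1\}$'' already presupposes $\det M(q_i)\in\RR\setminus\{0\}$).

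The conclusion is then immediate. Specializing the identity $\det M(x)=c\,p(x)$ at $x=q_i$ gives $\det M(q_i)=c\,p(q_i)$, and therefore
\[
\mbox{sign}(\det M(q_i))=\mbox{sign}(c)\cdot\mbox{sign}(p(q_i))=\mbox{sign}(c)\cdot\mbox{side}(q_i),\qquad i=1,2,
\]
where $\mbox{sign}(c)\in\{-1,1\}$ is one fixed quantity, independent of the query point. Cancelling this nonzero common factor shows that $\mbox{sign}(\det M(q_1))=\mbox{sign}(\det M(q_2))$ if and only if $\mbox{side}(q_1)=\mbox{side}(q_2)$, which is the desired equivalence.

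In terms of difficulty, essentially all of the real work has already been done in the preceding results: the technical core is the Cramer's-rule computation of Lemma~\ref{Lmatrixx} identifying $\det M(x)$ with a scalar multiple of $p(x)$, together with the full-rank statement of Lemma~\ref{Laux-sided} for points off the surface. The only points that need care are bookkeeping ones, and they are where I expect the subtlety to lie: first, that the scalar $c$ linking $\det M(x)$ and $p(x)$ is a genuine \emph{nonzero real constant not depending on $q$} --- this is where one must use that $M'$ is built from real evaluation data, and it is also where the ``single translate'' hypothesis is indispensable, since if $r\ge 2$ the matrix $M'$ would be rank-deficient by more than one and $\det M(x)$ would vanish identically, leaving nothing to compare; second, carrying the (constant) sign of $c$ through the equivalence, which is harmless precisely because it is the same for $q_1$ and $q_2$.
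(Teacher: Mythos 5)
Your proof is correct and takes essentially the same approach as the paper: both reduce the claim to Lemma~\ref{Laux-sided} (nonvanishing of the determinants) combined with Lemma~\ref{Lmatrixx} (identity $\det M(x)=c\,p(x)$ up to a constant). You are in fact slightly more careful than the paper's own proof in explicitly checking that the single-translate hypothesis forces $\mbox{corank}(M')=1$ so that Lemma~\ref{Lmatrixx} applies, and in flagging that real evaluation points are needed for the constant $c$ and the determinants to be real so that $\mbox{sign}(\cdot)$ is well defined --- a point the paper leaves implicit despite allowing $\tau_k\in\CC^n$.
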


\begin{proof}
For points $q_1, q_2$ as in the statement of the theorem, 
we have from Lemma \ref{Laux-sided} that $\det M(q_1))$ and $\det M(q_2))$
are non-zero, hence their sign is an integer in $\{-1,1\}$. 
We need to show that $\mbox{sign}(p(q_1))=\mbox{sign}(p(q_2))$ if and only if 
$\mbox{sign}(\det M(q_1))=\mbox{sign}(\det M(q_2))$.
But this is an immediate consequence from Lemma \ref{Lmatrixx}, since
$\det M(x)$ equals $p(x)$ up to a constant factor.
\end{proof}

Algorithm  \ref{Asidedness} summarizes the previous discussions
for deciding sidedness for any two query points.
The rank test at step 2 of the algorithm van be avoided if 
we directly compute sign$(\det M(q_i))$ and 
proceed depending on whether this sign equals 0 
(i.e. $\det  M(q_i) = 0$) or not.

\begin{algorithm}[ht] \label{Asidedness}
\caption{Sidedness} 
\dontprintsemicolon
\Input{Polynomial or rational parameterization
$x_i= f_i(t)/g_i(t),\, i=0,\dots,n$,\\
Predicted implicit polytope $Q$,\,
Query points $q_1, q_2\in (\RR^*)^{n+1}$.\;
}
\Output{1 if points lie on the same side of $p$,
0 if  $p(q_1)=0$ or $p(q_2)=0$, and -1 otherwise.}

\medskip
$M', M(x) \leftarrow$ Matx$(f_i(t)/g_i(t),\, i=0,\dots,n,\, Q)$\;
\lIf{$\mbox{corank}(M(q_1))=\mbox{corank}(M') \mbox{~or~} \mbox{corank}(M(q_2))=\mbox{corank}(M')$}{$\alpha \leftarrow 0$}\;
\lElseIf{$\mbox{sign}(\det M(q_1))=\mbox{sign}(\det M(q_2)$}{$\alpha \leftarrow 1$}\;
\Else{$\alpha \leftarrow -1$}\;
\Return $\alpha$\;
\end{algorithm}

\section{Conclusion}

We have shown that certain operations can be accomplished on the matrix 
representation of a hyper-surface, namely by using the interpolation matrix 
that we constructed. Our current work includes the study of further operations, 
most notably ray shooting, either in exact or approximate form.

To perform ray shooting, assume that a ray is parameterized 
by $\rho>0$ and given by $x_i=r_i(\rho),\, i=0,\ldots,n$, 
where the $r_i$ are linear polynomials in $\rho$.
We substitute $x_i$ by $r_i(\rho)$ in $M(x)$  thus obtaining a univariate 
matrix $M(\rho)$ whose entries are numeric except for its last row.
Assuming that $M(x)$ is not singular, 
imagine that we wish to develop $\det M(\rho)$ by expanding along its last row. 
Then in preprocessing we compute 
all minors $\det M_{|S|j}, \, j=1,\ldots,|S|$, corresponding to entries in the last row, where
 $M_{ij}$ is the submatrix of $M(\rho)$ obtained by deleting its $i$th row and $j$th column,
thus defining 
\[ 
p(\rho)=\sum_{j=1}^{|S|} (-1)^{|S|+j}\det (M_{|S|j}) m_j(\rho).              
\]
Note that every  $m_j(\rho)$ is a product of powers of linear polynomials in $\rho$.
Now ray shooting is reduced to finding the smallest positive real root 
of a univariate polynomial.
For this, we plan to employ state of the art real solvers which typically 
require values of the polynomial in hand. 

We are also extending our implementation so as to offer a 
complete and robust Maple software. At the same time we 
continue to develop code that uses specialized numerical 
linear algebra in \texttt{Sage} and \texttt{Matlab}.
More importantly, we shall continue comparisons to other methods 
which were started in \cite{EKKL11}.
 
Future work includes studying the matrix structure, which 
generalizes the classic Vandermonde structure, since
the matrix columns are indexed by monomials and the
rows by values on which the monomials are evaluated.
This reduces matrix-vector multiplication to multipoint evaluation
of a multivariate polynomial. 
However, to gain an order of magnitude in matrix operations 
we would have to implement fast multivariate interpolation and 
evaluation over arbitrary points, for which there are many open questions.

Sparse interpolation is the problem of interpolating a multivariate
polynomial when information of its support is given~\cite[Ch.14]{Zippel93}.  
This may simply be a bound $\sigma=|S|$ on support cardinality; then
complexity is $O( m^3 \delta n\log n + \sigma^3 )$, where $\delta$ bounds the
output degree per variable, $m$ is the actual support cardinality,
and $n$ the number of variables.
A probabilistic approach in $O(m^2 \delta n)$ requires as input only $\delta$.
The most difficult step in the type of algorithms by Zippel for interpolation is 
computing the multivariate support: in our case this is known, hence our task should be easier.

Lastly, we are developing algorithms for Minkowski decomposition in $\RR^2$ and $\RR^3$ 
which should significantly reduce the size of polytope $Q$ hence 
the number of lattice points that define $M$ when the predicated polytope 
is larger than the precise implicit polytope.

\paragraph*{Acknowledgement.}
This research has been co-financed by the European Union
(European Social Fund - ESF) and Greek national funds through
the Operational Program ``Education and Lifelong Learning" 
of the National Strategic Reference Framework (NSRF) - Research
Funding Program: THALIS-UOA (MIS 375891).

\bibliographystyle{alpha}
\bibliography{ekk14} 

\end{document}